\def\NZQ{\mathbb}               
\def\ZZ{{\NZQ Z}}
\def\RR{{\NZQ R}}
\def\Pc{{\mathcal P}}
\def\ab{{\mathbf a}}
\def\opn#1#2{\def#1{\operatorname{#2}}} 
\opn\chara{char} \opn\length{\ell} \opn\pd{pd} \opn\rk{rk}
\opn\projdim{proj\,dim} \opn\injdim{inj\,dim} \opn\rank{rank}
\opn\depth{depth} \opn\grade{grade} \opn\height{height}
\opn\embdim{emb\,dim} \opn\codim{codim}
\opn\Cl{Cl}
\opn\Tr{Tr} \opn\bigrank{big\,rank}
\opn\superheight{superheight}\opn\lcm{lcm}
\opn\trdeg{tr\,deg}
\opn\rdeg{rdeg}
	\opn\reg{reg} \opn\lreg{lreg} \opn\ini{in} \opn\lpd{lpd}
	\opn\size{size} \opn\sdepth{sdepth}
	\opn\link{link}\opn\fdepth{fdepth}\opn\lex{lex}
	\opn\tr{tr}
	\opn\type{type}
	\opn\gap{gap}
	\opn\arithdeg{arith-deg}
	\opn\revlex{revlex}
	\opn\div{div} \opn\Div{Div} \opn\cl{cl} \opn\Cl{Cl}
	\opn\Spec{Spec} \opn\Supp{Supp} \opn\supp{supp} \opn\Sing{Sing}
	\opn\Ass{Ass} \opn\Min{Min}\opn\Mon{Mon}
	\opn\Ann{Ann} \opn\Rad{Rad} \opn\Soc{Soc}
	\opn\Im{Im} \opn\Ker{Ker} \opn\Coker{Coker} \opn\Am{Am}
	\opn\Hom{Hom} \opn\Tor{Tor} \opn\Ext{Ext} \opn\End{End}
	\opn\Aut{Aut} \opn\id{id}
	\opn\nat{nat}
	\opn\pff{pf}
	\opn\Pf{Pf} \opn\GL{GL} \opn\SL{SL} \opn\mod{mod} \opn\ord{ord}
	\opn\Gin{Gin} \opn\Hilb{Hilb}\opn\sort{sort}
	\opn\PF{PF}\opn\Ap{Ap}
	\opn\mult{mult}
	\opn\bight{bight}
	\opn\div{div}
	\opn\Div{Div}
	\opn\aff{aff}
	\opn\relint{relint} \opn\st{st}
	\opn\lk{lk} \opn\cn{cn} \opn\core{core} \opn\vol{vol}  \opn\inp{inp} \opn\nilpot{nilpot}
	\opn\link{link} \opn\star{star}\opn\lex{lex}\opn\set{set}
	\opn\width{wd}
	\opn\Fr{F}
	\opn\QF{QF}
	\opn\G{G}
	\opn\type{type}\opn\res{res}
	\opn\conv{conv}
	\opn\Int{Int}
	\opn\Deg{Deg}
	\opn\Sym{Sym}
	\opn\Con{Con}
	\opn\gr{gr}
	\def\pot#1#2{#1[\kern-0.28ex[#2]\kern-0.28ex]}
	\opn\dirlim{\underrightarrow{\lim}}
	\opn\inivlim{\underleftarrow{\lim}}
	\def\Implies{\ifmmode\Longrightarrow \else
		\unskip${}\Longrightarrow{}$\ignorespaces\fi}
	\def\implies{\ifmmode\Rightarrow \else
		\unskip${}\Rightarrow{}$\ignorespaces\fi}
	\def\iff{\ifmmode\Longleftrightarrow \else
		\unskip${}\Longleftrightarrow{}$\ignorespaces\fi}
	\newtheorem{Theorem}{Theorem}[section]
	\newtheorem{Lemma}[Theorem]{Lemma}
	\newtheorem{Corollary}[Theorem]{Corollary}
	\newtheorem{Proposition}[Theorem]{Proposition}
	\theoremstyle{definition}
	\newtheorem{Remark}[Theorem]{Remark}
	\newtheorem{Example}[Theorem]{Example}
	\let\epsilon\varepsilon
	\let\kappa=\varkappa
	\opn\dis{dis}
	\def\pnt{{\raise0.5mm\hbox{\large\bf.}}}
	\opn\Lex{Lex}
\begin{document}

\title{A New Invariant of Lattice polytopes}
\author{Winfried Bruns}
\address{Institut f\"ur Mathematik, Universit\"at Osnabr\"uck, Albrechtstr. 28a, 49074 Osnabr\"uck, Germany}
\email{wbruns@uos.de}
\author{Takayuki Hibi}
\address{Takayuki Hibi, Department of Pure and Applied Mathematics, Graduate School of Information Science and Technology, Osaka University, Suita, Osaka 565-0871, Japan}
\email{hibi@math.sci.osaka-u.ac.jp}
\dedicatory{}
\keywords{lattice polytope, canonical module, empty simplex, triangulation}
\subjclass[2010]{Primary 52B20; Secondary 05E40}
\begin{abstract}
The maximal degree of monomials belonging to the unique minimal system of monomial generators of the canonical module $\omega(K[\Pc])$ of the toric ring $K[\Pc]$  defined by a lattice polytope $\Pc$ will be studied.  It is shown that if $\Pc$ possesses an interior lattice point, then the maximal degree is at most $\dim \Pc - 1$, and that this bound is the best possible  in general.    
\end{abstract}	
\maketitle
\thispagestyle{empty}

\section*{Introduction}
The original motivation of the present paper is to investigate the maximal degree of monomials belonging to the unique minimal system of monomial generators of the canonical module of the toric ring defined by a lattice polytope.

Let $T = K[x_1, x_1^{-1}, \ldots, x_m, x_m^{-1}, t, t^{-1}]$ denote the Laurent polynomial ring in $m + 1$ variables over a field $K$.  One associates each $\ab = (a_1, \ldots, a_m) \in \ZZ^m$ with the Laurent monomial ${\bf x}^\ab = x_1^{a_1} \cdots x_d^{a_m} \in T$.  Let $\Pc \subset \RR^m$ be a lattice polytope of dimension $d$ (we do not exclude $d < m$). One naturally identifies $\Pc$ with $\Pc \times \{1\} \subset \RR^{m+1}$.  Let $C(\Pc) \subset \RR^{m+1}$ denote the rational polyhedral cone spanned by $\Pc \times \{1\}$. The {\em toric ring} of $\Pc$ is the subring
\[
K[\Pc] = K[{\bf x}^\ab t^n : (\ab,n) \in C(\Pc) \cap \ZZ^{m+1}]
\] 
of $T$. The algebraic and combinatorial study of toric rings is contained in several books, for example in \cite{BG} and \cite{Hibi_red_book}.  We set $\deg({\bf x}^\ab t^n) = n$ for all $\ab\in\ZZ^m$.  Then $K[\Pc]$ is a positively graded domain. It is not necessarily generated by degree $1$ elements; if it is so, one says that $\Pc$ is integrally closed in $\ZZ^m$ or, in other terminology, satisfies the integer decomposition property (IDP). (For the normality of $\Pc$ one must replace $\ZZ^m$ as the lattice of reference by the sublattice generated by the lattice points in $\Pc$.)
 
The toric ring $K[\Pc]$ is a normal semigroup ring, and therefore Cohen--Macaulay by Hochster's theorem. By the theorem of Danilov--Stanley its canonical module $\omega(K[\Pc])$ is generated by those ${\bf x}^\ab t^n$ for which $(\ab,n)$ is in the (relative) interior of $C(\Pc)$.  Let $G(\omega(K[\Pc]))$ denote the unique minimal system of monomial generators of $\omega(K[\Pc])$.  We study the maximal degree of monomials belonging to $G(\omega(K[\Pc]))$.  Among several results, especially Corollary \ref{canonical_module} says that, if a lattice polytope of dimension $d\ge 2$ possesses an interior lattice point, then the maximal degree of monomials in $G(\omega(K[\Pc]))$ is at most $d - 1$.  Furthermore, the upper bound is best possible (Corollary \ref{best_possible}).  

{\em Normaliz} \cite{Normaliz} was helpful to the preparation of the present paper. The methods used below are inspired  by those in \cite[Section 1.2]{BG}.

Instead of $\ZZ^m$ one can start with a free abelian group $M$ as is common in toric geometry.  However, since our target is convex polytopes themselves, in the present paper, we work in the frame of  $\ZZ^m$.       

\section{Reduced $\Pc$-degree}
In the following we will often switch from a polytope $\Pc \subset \RR^m$ to $\Pc \times \{1\} \subset \RR^{m+1}$ without changing the name, and we may also identify monomials and lattice points.

Recall that a convex polytope $\Pc \subset \RR^m$ of dimension $d$ is a {\em lattice polytope} if each vertex of $\Pc$ belongs to $\ZZ^m$.  Let
\[
M(\Pc) =  \sum_{x \,\in\, (\Pc \times \{1\}) \,\cap\, \ZZ^{m+1}} \ZZ_+ \, x \subset C(\Pc)
\]
be the semigroup generated by $(\Pc \times \{1\}) \cap \ZZ^{m+1}$. Furthermore, let $\Int C(\Pc)$ denote the interior of $C(\Pc)$ and $M^*(\Pc) = \Int C(\Pc) \cap \ZZ^{m+1}$.  As already said, the monomials associated with the lattice points in $M^*(\Pc)$ span the canonical module of $K[\Pc]$.

In accordance with the degree of monomials introduced above, the degree of $y = (y_1, \ldots, y_d, y_{m+1})$ belonging to $C(\Pc) \cap \ZZ^{m+1}$ is $\deg y = y_{m+1}$.  Each $y \in M^*(\Pc)$ can be expressed as $y = z + w$ with $z \in M^*(\Pc)$ and $w \in M(\Pc)$ in such a way that $z$ has minimal degree for all possible choices of $w$.  The {\em reduced $\Pc$-degree} of $y$ is 
\[
\rdeg y = \rdeg_\Pc y = \deg z.
\]
One says that $y \in M^*(\Pc)$ is {\em $M(\Pc)$-irreducible} if $\rdeg y = \deg y$.  Clearly the set $G(\Pc)$ of all $M(\Pc)$-irreducible elements is the unique minimal system of generators of $M^*(\Pc)$ with respect to the action of $M(\Pc)$ by addition.  
 
In the present paper, the invariant, called the {\em  int\hbox{$^*$}degree}, 
\begin{equation}
\label{reduced_degree}
\max\{ \rdeg y : y \in M^*(\Pc)\}
\end{equation}
of $\Pc$ is studied.  As indicated above, a bound for this invariant implies the same bound for the minimal set of generators of the canonical module of $K[\Pc]$.

\section{Lattice simplices}
First, the int\hbox{$^*$}degree (\ref{reduced_degree}) of a lattice simplex is studied.     

\begin{Lemma}
\label{simplex_a}
Let $\sigma \subset \RR^m$ be a lattice simplex of dimension $d$.  Then $$\max\{ \rdeg y : y \in M^*(\sigma)\} \leq d+1.$$
\end{Lemma}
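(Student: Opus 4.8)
The plan is to exploit the fact that the cone over a simplex is \emph{simplicial}, which yields a unique representation of every lattice point in a fundamental parallelepiped. Write the vertices of $\sigma$ as $v_0,\dots,v_d\in\ZZ^m$ and let $w_i=(v_i,1)\in\ZZ^{m+1}$ be their lifts to height $1$. Since $\sigma$ has dimension $d$, the points $v_0,\dots,v_d$ are affinely independent, so $w_0,\dots,w_d$ are linearly independent and span $C(\sigma)$. I would work inside the $(d+1)$-dimensional linear span $V=\sum_i\RR w_i$ together with the lattice $L=V\sect\ZZ^{m+1}$, so that $C(\sigma)$ becomes a full-dimensional simplicial cone and $M^*(\sigma)$ is exactly its relative-interior lattice points. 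Note that each $w_i$ is itself a degree-$1$ lattice point of $\sigma\times\{1\}$, hence $w_i\in M(\sigma)$.

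The central step is a fundamental-domain decomposition adapted to the interior. Given $y\in M^*(\sigma)$, expand $y=\sum_{i=0}^d\lambda_i w_i$ with all $\lambda_i>0$ (strict positivity of all coefficients characterizes the relative interior of a simplicial cone). Split each coordinate as $\lambda_i=n_i+\mu_i$ with $n_i=\lceil\lambda_i\rceil-1\in\ZZ_{\ge 0}$ and $\mu_i\in(0,1]$, and set $q=\sum_i\mu_i w_i$ and $c=\sum_i n_i w_i$. Then $y=q+c$, where $c\in M(\sigma)$ as a nonnegative integer combination of the $w_i$, and $q=y-c\in L$ is a lattice point; moreover all $\mu_i>0$, so $q$ lies in the relative interior, i.e.\ $q\in M^*(\sigma)$.

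This decomposition immediately bounds the reduced degree. Since $\deg w_i=1$, we have $\deg q=\sum_{i=0}^d\mu_i\le d+1$, because each $\mu_i\le 1$ and there are $d+1$ summands. As $q\in M^*(\sigma)$ and $c\in M(\sigma)$, the definition of the reduced degree gives $\rdeg y\le\deg q\le d+1$, and since $y$ was arbitrary this proves the claimed bound.

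I do not expect a serious obstacle; the argument is essentially bookkeeping once the simplicial structure is in place. The only points requiring care are (i) choosing the half-open parallelepiped as $\{0<\lambda_i\le 1\}$ rather than $\{0\le\lambda_i<1\}$, precisely so that the remainder $q$ stays in the \emph{interior} of the cone rather than on its boundary, and (ii) verifying that $q$ really is a lattice point — which holds because $q=y-c$ is a difference of lattice points in $L$, and not because integrality is visible from its barycentric expansion.
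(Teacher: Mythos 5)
Your proposal is correct and follows essentially the same route as the paper: the paper's proof also writes $y=\sum_i q_i x_i$ with $q_i>0$ in terms of the lifted vertices and splits off $\lceil q_i-1\rceil$ from each coefficient (which equals your $n_i=\lceil\lambda_i\rceil-1$), leaving a remainder in the half-open box $(0,1]^{d+1}$ of degree at most $d+1$. Your write-up is just a more careful version of the same parallelepiped argument, making explicit the lattice-point and interiority checks that the paper leaves implicit.
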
 

\begin{proof}
Let $x_0, x_1, \ldots, x_d$ be the vertices of $\Pc$.  Each $y \in M^*(\Pc)$ is expressed uniquely as $y = \sum_{i=0}^d q_i x_i$ with each $q_i > 0$.  Let $q_i' = q_i - \lceil  q_i - 1 \rceil$.  Then $0 < q_i \leq 1$.  One has $y = z + w$, where
\[
z = \sum_{i=0}^d q'_i x_i,\qquad 
w = \sum \lceil  q_i - 1 \rceil x_i.
\]
Since $\deg z \leq d + 1$, one has $\rdeg y \leq d + 1$, as desired. 
\end{proof}

A lattice simplex $\sigma \subset \RR^m$ is said to be {\em empty} if $\sigma \cap \ZZ^m$ coincides with the set of vertices of $\sigma$. Nontrivial empty $d$-simplices exist for all $d\ge 3$; see \cite[Remark 2.55]{BGT}.

\begin{Lemma}
\label{simplex_b}
Let $\sigma \subset \RR^m$ be a lattice simplex of dimension $d$.  If $\sigma$ is nonempty, then 
\[
\max\{ \rdeg y : y \in M^*(\sigma)\} \leq d.
\]
\end{Lemma}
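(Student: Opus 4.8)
The plan is to build on the construction from the proof of Lemma \ref{simplex_a}, to pin down exactly when its bound $d+1$ is attained, and then to use the nonemptiness hypothesis to push that single extremal case down to $d$.

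First I would recall the canonical decomposition produced there. Writing the vertices of $\sigma$ as $x_0, \dots, x_d$, every $y \in M^*(\sigma)$ is $y = z + w$ with $z = \sum_{i=0}^d q_i' x_i$, $0 < q_i' \le 1$, and $w \in M(\sigma)$ a nonnegative integer combination of the $x_i$. Here $z = y - w$ is a lattice point lying in the interior of $C(\sigma)$, so $z \in M^*(\sigma)$, and $\deg z = \sum_{i=0}^d q_i' \le d+1$. If $\deg z \le d$ we already get $\rdeg y \le \deg z \le d$, so the entire issue is the extremal case.

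The key observation is that $\deg z = d+1$ forces every $q_i' = 1$, hence $z = s := \sum_{i=0}^d x_i$, the sum of the vertices. Thus it suffices to exhibit, once and for all, a single decomposition of $s$ of degree $d$; absorbing $w$ into its $M(\sigma)$-part then settles every $y$ with $\deg z = d+1$. This is where nonemptiness enters: I would choose a lattice point $p \in \sigma \cap \ZZ^m$ that is not a vertex, regarded at height $1$, so that $p \in M(\sigma)$ and $p = \sum_{i=0}^d \lambda_i x_i$ with $\lambda_i \ge 0$ and $\sum_{i=0}^d \lambda_i = 1$. Because $p$ is not a vertex, no $\lambda_i$ can equal $1$ (that would force all the others to vanish and make $p = x_i$), so $\lambda_i < 1$ for all $i$.

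Then $s - p = \sum_{i=0}^d (1 - \lambda_i) x_i$ has all coefficients strictly positive, hence is an interior lattice point, i.e.\ $s - p \in M^*(\sigma)$, with $\deg(s - p) = (d+1) - 1 = d$. Writing $y = (s - p) + (p + w)$, where $s - p \in M^*(\sigma)$ and $p + w \in M(\sigma)$, yields $\rdeg y \le d$ in the remaining case and completes the argument. The only genuinely delicate points are the claim that the bound $d+1$ is attained solely at $z = s$ and the fact that a non-vertex lattice point has all barycentric coordinates strictly below $1$; everything else is the same bookkeeping as in Lemma \ref{simplex_a}.
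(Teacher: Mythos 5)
Your proof is correct and takes essentially the same route as the paper's: reduce the problem to the single extremal case $z = x_0 + \cdots + x_d$ (forced by $\deg z = d+1$ with all $q_i' \le 1$) and subtract a non-vertex lattice point $p = \sum_{i=0}^d \lambda_i x_i$ to land in $M^*(\sigma)$ at degree $d$. The only difference is presentational — the paper phrases this as showing $x_0 + \cdots + x_d$ cannot be $M(\sigma)$-irreducible, and you additionally spell out the small point, left implicit there, that a non-vertex lattice point has all $\lambda_i < 1$.
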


\begin{proof}
Let $x_0, x_1, \ldots, x_d$ be the vertices of $\sigma$.  The proof of Lemma \ref{simplex_a} says that $y \in M^*(\sigma)$ is $M(\sigma)$-irreducible if and only if
\begin{equation}
\label{irreducible}
y = \sum_{i=0}^d q_i x_i,\qquad 0 < q_0, q_1, \ldots, q_d \leq 1.
\end{equation}
Our work is then to show that $y$ cannot be $M(\sigma)$-irreducible if $\deg y = d + 1$.  Obviously, one has $y = x_0 + \cdots + x_d$ if $\deg y = d+1$.  Since $\sigma$ is nonempty, there exists $x \in \sigma \cap \ZZ^{m+1}$ with $x = \sum_{i=0}^{d} r_i x_i$ with each $0 \leq r_i < 1$ and $\sum_{i=0}^{d} r_i = 1$.  Since  $0 \leq r_i < 1$ for all $i$, one has $y - x \in M^*(\sigma)$.  Thus $y$ cannot be $M(\sigma)$-irreducible. 
\end{proof} 

\begin{Lemma}
\label{simplex_c}
Let $\sigma \subset \RR^m$ be a lattice simplex of dimension $d$ and $x_0, x_1, \ldots, x_d$ the vertices of $\sigma$.  If $\sigma$ is empty, then 
\[
\rdeg(x_0 + \cdots + x_d) = d + 1
\]
and, for all $y \in M^*(\sigma)$, one has
\[
\rdeg y \neq d.
\]
\end{Lemma}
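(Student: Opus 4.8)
The plan is to extract from emptiness the single structural fact that drives everything: since $\sigma \cap \ZZ^m$ consists only of the vertices, the semigroup $M(\sigma)$ equals $\sum_{i=0}^d \ZZ_+ x_i$ and is freely generated by $x_0, \ldots, x_d$. Hence every $w \in M(\sigma)$ is written uniquely as $w = \sum_{i=0}^d c_i x_i$ with $c_i \in \ZZ_+$, and I will use the uniqueness of coordinates in the affinely independent system $x_0, \ldots, x_d$ throughout.

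For the equality $\rdeg(x_0 + \cdots + x_d) = d+1$, I would show that $y = x_0 + \cdots + x_d$ is $M(\sigma)$-irreducible, so that $\rdeg y = \deg y = d+1$. Suppose $y = z + w$ with $z \in M^*(\sigma)$ and $w \in M(\sigma)$. Writing $z = \sum p_i x_i$ with all $p_i > 0$ (as $z$ lies in the interior of $C(\sigma)$) and $w = \sum c_i x_i$ with $c_i \in \ZZ_+$, comparison of coordinates forces $p_i + c_i = 1$ for every $i$. Since $p_i > 0$ and $c_i$ is a non-negative integer, each $c_i = 0$, so $w = 0$; thus $y$ admits no nontrivial reduction.

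For the assertion $\rdeg y \neq d$, I would first note that $\rdeg y = \deg z$ is always realized by an $M(\sigma)$-irreducible $z$: a minimal-degree reduction of $y$ cannot itself be reducible, else one could reduce further and lower the degree. By the proof of Lemma \ref{simplex_a}, such an irreducible $z$ has the form $z = \sum q_i x_i$ with $0 < q_i \leq 1$ for all $i$ (if some $q_i > 1$, subtract the vertex $x_i$). It therefore suffices to prove that no irreducible $z$ has degree $\deg z = \sum q_i = d$. Assuming such a $z$ exists, set $s_i = 1 - q_i$; then $s_i \in [0,1)$ and $\sum_{i=0}^d s_i = (d+1) - d = 1$, so $x := \sum_{i=0}^d s_i x_i$ is a convex combination of the vertices and hence a point of $\sigma$.

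The decisive point is that $x = (x_0 + \cdots + x_d) - z$ is a difference of lattice points, so $x$ is a lattice point of $\sigma$; emptiness then forces $x$ to be a vertex $x_k$. By uniqueness of barycentric coordinates this means $s_k = 1$, i.e. $q_k = 0$, contradicting $q_k > 0$. Hence no irreducible element has degree $d$, and $\rdeg y \neq d$ for every $y \in M^*(\sigma)$. I expect the only steps needing care to be the reduction of the claim to a statement about irreducible elements and the verification that $x$ genuinely lies in $\sigma$; once the slack vector $(s_i)$ is introduced, the emptiness hypothesis closes the argument at once.
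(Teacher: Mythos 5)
Your proof is correct and follows essentially the same route as the paper's: you establish irreducibility of $x_0+\cdots+x_d$ from the free generation of $M(\sigma)$ by the vertices, and for the second claim you observe that the degree-one complement $(x_0+\cdots+x_d)-z$ of a putative irreducible $z$ of degree $d$ is a lattice point of $\sigma$, hence a vertex by emptiness, yielding a contradiction (the paper phrases this as $z = x_1+\cdots+x_d \notin M^*(\sigma)$, which is the same boundary obstruction as your $q_k = 0$). Your explicit verification that a minimal-degree decomposition yields an irreducible $z$, and that irreducible elements satisfy $0 < q_i \leq 1$, merely spells out steps the paper leaves implicit via Lemma \ref{simplex_a}.
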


\begin{proof}
Since $\sigma$ is empty, 
\[
M(\sigma) = \ZZ_+ x_0 + \cdots + \ZZ_+ x_d
\]
and $x_0, \ldots, x_d$ are linearly independent.  Thus $(x_0 + \cdots + x_d) - w \not\in M^*(\sigma)$ for all nonzero $ w \in M(\sigma)$, and $x_0 + \cdots + x_d$ is $M(\sigma)$-irreducible.  

Now, suppose that there exists $y \in M^*(\sigma)$ with $\rdeg y = d$ which is $M(\sigma)$-irreducible.  Then $y$ is of the form (\ref{irreducible}), $(x_0 + \cdots + x_d) - y$ is of degree $1$ and belongs to $C(\sigma) \cap \ZZ^{m+1}$.  Hence $(x_0 + \cdots + x_d) - y$ must be a vertex of $\sigma$, say, $x_0$.  Thus $y = x_1 + \cdots + x_d$, which cannot belong to $M^*(\sigma)$. 
\end{proof}

\section{Lattice polytopes}
Now we study the int\hbox{$^*$}degree (\ref{reduced_degree}) of a general lattice polytope.

\begin{Theorem}
\label{winfried_a}
Let $\Pc \subset \RR^m$ be a lattice polytope of dimension $d$.  
\begin{itemize}
\item[(a)]  One has $\rdeg y \leq d + 1$ for all $y \in M^*(\Pc)$.

\item[(b)] The following conditions are equivalent:
\begin{itemize}
\item[(i)] $\Pc$ is not an empty simplex;

\item[(ii)] $\rdeg y \leq d$ for all $y \in M^*(\Pc)$.
\end{itemize}
\end{itemize}
\end{Theorem}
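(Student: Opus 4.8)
The plan is to reduce the general polytope to the simplex lemmas by means of a triangulation. Fix a triangulation $\mathcal T$ of $\Pc$ into lattice simplices whose vertices are vertices of $\Pc$; such a triangulation exists, and every face $\tau$ of $\mathcal T$ satisfies $C(\tau)\subseteq C(\Pc)$ and $M(\tau)\subseteq M(\Pc)$. The geometric input I need is the containment: if a face $\tau$ of $\mathcal T$ is not contained in $\partial\Pc$, then $\relint\tau\subseteq\Int\Pc$, hence $\relint C(\tau)\subseteq\Int C(\Pc)$. This follows from a supporting-hyperplane argument: a point of $\relint\tau$ lying on a facet of $\Pc$ would force all of $\tau$ into that facet's hyperplane, contradicting that $\tau$ is not in $\partial\Pc$.

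For part (a), take $y\in M^*(\Pc)$ and let $\tau$ be the unique face of $\mathcal T$ with $y\in\relint C(\tau)$. Since $y\in\Int C(\Pc)$, the face $\tau$ is not contained in $\partial\Pc$, so $\relint C(\tau)\subseteq\Int C(\Pc)$. Writing $y=\sum_i q_iv_i$ over the vertices $v_i$ of $\tau$ with all $q_i>0$ and replacing each $q_i$ by $q_i'\in(0,1]$ exactly as in the proof of Lemma \ref{simplex_a}, I get $y=z+w$ with $z=\sum_i q_i'v_i\in\relint C(\tau)$, $w\in M(\tau)\subseteq M(\Pc)$, and $\deg z=\sum_i q_i'\le\dim\tau+1\le d+1$. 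As $z$ is a lattice point in $\Int C(\Pc)$, it lies in $M^*(\Pc)$, so $\rdeg y\le d+1$.

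For part (b), the implication (ii)$\Rightarrow$(i) is the contrapositive of Lemma \ref{simplex_c}, which gives $\rdeg(x_0+\cdots+x_d)=d+1$ when $\Pc$ is an empty simplex. For (i)$\Rightarrow$(ii) it suffices to show that no $M(\Pc)$-irreducible element has degree $d+1$. Let $y$ be such an element and let $\tau$ be the face of $\mathcal T$ with $y\in\relint C(\tau)$. The bound from part (a) gives $d+1=\deg y=\rdeg y\le\dim\tau+1$, so $\dim\tau=d$ and $\tau=\sigma$ is a maximal simplex with $y\in M^*(\sigma)$. Since $M(\sigma)\subseteq M(\Pc)$, the element $y$ is also $M(\sigma)$-irreducible, so $\rdeg_\sigma y=d+1$; by Lemmas \ref{simplex_a}, \ref{simplex_b} and \ref{simplex_c} this forces $\sigma$ to be empty and $y=x_0+\cdots+x_d=:s$, the vertex-sum of an empty maximal simplex.

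The crux, which I expect to be the main obstacle, is to contradict the irreducibility of $s$ by reducing it with a lattice point of $\Pc$ lying outside $\sigma$. Because $\Pc$ is not an empty simplex while $\sigma$ is, we have $\sigma\ne\Pc$; hence not every facet of $\sigma$ can lie in $\partial\Pc$, for otherwise each facet hyperplane of $\sigma$ would support $\Pc$, forcing $\Pc\subseteq\sigma$ and thus $\sigma=\Pc$. Choose a facet $\tau_i=\conv(x_j:j\ne i)$ of $\sigma$ not contained in $\partial\Pc$. Then $\relint C(\tau_i)\subseteq\Int C(\Pc)$, and the lattice point $s-x_i=\sum_{j\ne i}x_j$ normalizes to the centroid of $\tau_i$, so it lies in $\relint C(\tau_i)\subseteq\Int C(\Pc)$ and hence in $M^*(\Pc)$. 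Thus $s=(s-x_i)+x_i$ is a reduction with $x_i\in M(\Pc)$, giving $\rdeg s\le d$ and contradicting the irreducibility of $s$. The delicate points are the two containment claims and the forcing $\Pc\subseteq\sigma$, all of which rest on the supporting-hyperplane argument.
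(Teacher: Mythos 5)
Your proposal is correct and follows essentially the same route as the paper: triangulate $\Pc$, reduce to the simplex lemmas via the cones $C(\tau)$ over the faces, and kill a putative degree-$(d+1)$ irreducible element $s=x_0+\cdots+x_d$ by locating a facet of its carrier $d$-simplex whose relative interior lies in the interior of $\Pc$ and splitting off the opposite vertex --- exactly the paper's final step. The only minor differences are that you triangulate using just the vertices of $\Pc$ and deduce emptiness of $\sigma$ from Lemma \ref{simplex_b} and irreducibility, whereas the paper starts from a full lattice triangulation into empty simplices, and that your supporting-hyperplane arguments (for $\relint\tau\subseteq\Int\Pc$ and for ``all facets of $\sigma$ in $\partial\Pc$ forces $\Pc=\sigma$'') spell out steps the paper leaves implicit.
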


\begin{proof}
Let $\Sigma$ be a lattice triangulation of $\Pc$.  In other words, $\Sigma$ is a triangulation of $\Pc$ for which each $\sigma\in\Sigma$ is an empty simplex, and consequently, each $x \in \Pc \cap \ZZ^m$ is a vertex of some $\sigma \in \Sigma$.  Let $\Sigma'$ denote the subset of $\Sigma$ consisting of those $\sigma \in \Sigma$ for which $(\sigma \setminus \partial \sigma) \cap \partial \Pc = \emptyset$. ($\partial$ denotes the relative boundary.)

Since $\Pc$ is the disjoint union of $\sigma \setminus \partial \sigma$ with $\sigma \in \Sigma$, it follows that $\Int C(\Pc)$ is the disjoint union of $\Int C(\sigma)$ with $\sigma \in \Sigma'$.  Thus {\rm (a)} follows from Lemma \ref{simplex_a}.  On the other hand, {\rm (ii)} $\Rightarrow$ {\rm (i)} in {\rm (b)} follows from Lemma \ref{simplex_c}. 

 Now, in order to prove {\rm (i)} $\Rightarrow$ {\rm (ii)} in {\rm (b)}, suppose that $y \in M^*(\Pc)$ belongs to $\Int C(\sigma)$ with $\sigma \in \Sigma'$.  One may assume that $\dim \sigma = d$.  Since $\rdeg_\Pc y \leq \rdeg_\sigma y \leq d + 1$, it is enough to exclude $\rdeg_\sigma y = d + 1$.  Let $\rdeg_\sigma y = d + 1$.   Then $y = x_0 + x_1 + \cdots + x_d$, where $x_0, x_1, \ldots, x_d$ are the vertices of $\sigma$.  Since $\Pc$ is not an empty simplex, one has $\Pc \neq \sigma$, which guarantees the existence of a facet $\tau$ of $\sigma$ with $\tau \setminus \partial \tau \subset \Pc \setminus \partial \Pc$.  Let, say, $x_0 \not\in \tau$.  Since $(\tau \setminus \partial \tau) \cap \partial \Pc = \emptyset$, it follows that $x_1 + \cdots + x_d \in {\rm Int}C(\Pc)$.  Thus $y = x_0 + (x_1 + \cdots + x_d)$ and $\rdeg y \leq d$, a contradiction.
\end{proof}

When $(\Pc \setminus \partial \Pc) \cap \ZZ^m \neq \emptyset$, the invariant (\ref{reduced_degree}) can be improved.  

\begin{Theorem}
\label{winfried_b}
Let $\Pc \subset \RR^m$ be a lattice polytope of dimension $d \geq 2$ and suppose that $(\Pc \setminus \partial \Pc) \cap \ZZ^m \neq \emptyset$.  Then $\rdeg y \leq d - 1$ for all $y \in M^*(\Pc)$. 
\end{Theorem}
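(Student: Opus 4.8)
The plan is to prove the following reduction statement, which immediately yields the theorem: \emph{every $y\in M^*(\Pc)$ with $\deg y\geq d$ is $M(\Pc)$-reducible}, i.e.\ $y-(x,1)\in M^*(\Pc)$ for some lattice point $x\in\Pc$. Indeed, applying this repeatedly lowers the degree of an interior remainder by one at a time until it reaches $d-1$, exhibiting a decomposition whose irreducible part has degree $\leq d-1$, so $\rdeg y\leq d-1$. As in the proof of Theorem \ref{winfried_a} I fix a lattice triangulation $\Sigma$ and use $\Sigma'$: each $y\in M^*(\Pc)$ lies in the relative interior of $C(\sigma)$ for a unique cell $\sigma\in\Sigma'$, and passing from $y$ to its $\sigma$-reduced form $z=\sum q_ix_i$ (all $q_i\in(0,1]$) satisfies $\rdeg_\Pc y\leq\rdeg_\Pc z$, so it suffices to treat $z$.

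First I would pin down $\deg z$. Since $z$ is a lattice point, $\deg z=\sum q_i$ is an \emph{integer} with $1\leq\deg z\leq e+1$, where $e=\dim\sigma$. Because $\sigma$ is empty, the argument of Lemma \ref{simplex_c} applies verbatim (with $d$ replaced by $e$): if $\deg z=e$, then the sum of the vertices of $\sigma$ minus $z$ would be a degree-$1$ lattice point of $\sigma$, hence a vertex, forcing some $q_i=0$ and contradicting $z\in\Int C(\sigma)$. Thus $\deg z\neq e$, so either $\deg z\leq e-1\leq d-1$ (done) or $\deg z=e+1$ with $z$ the sum of all vertices of $\sigma$. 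The latter exceeds $d-1$ only for $e\in\{d-1,d\}$. When $e=d$, the interior-facet peeling of Theorem \ref{winfried_a} writes $z=x_0+(x_1+\dots+x_d)$ with $x_1+\dots+x_d\in\Int C(\Pc)$, which both reduces $z$ and produces a point of exactly the shape arising for $e=d-1$. Everything therefore funnels into one configuration.

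The crux is thus: let $\tau$ be an empty $(d-1)$-simplex with $\tau\setminus\partial\tau\subset\Pc\setminus\partial\Pc$ and vertices $x_1,\dots,x_d$, and let $z=x_1+\dots+x_d\in M^*(\Pc)$ with $\deg z=d$; I must show $z$ is reducible. I would phrase reducibility through the primitive support forms $\ell_G$ of the facets $G$ of $\Pc$, so that for lattice $v$ one has $v\in M^*(\Pc)$ iff $\ell_G(v)\geq1$ for all $G$: a lattice point $x\in\Pc$ works exactly when $\ell_G((x,1))\leq\ell_G(z)-1$ for every $G$. The binding constraints come from the \emph{tight} facets, those with $\ell_G(z)=1$, which force $x$ to lie on $G$. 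Two clean observations organize the search: a nonnegative combination containing a positive multiple of $(p,1)$ is automatically in $\Int C(\Pc)$, since $\ell_G((p,1))\geq1$ for all $G$ (this is what powers the degree-$(d+1)$ peel); and subtracting a vertex $x_j$ of $\tau$ succeeds unless the opposite ridge $\conv(x_i:i\neq j)$ lies in $\partial\Pc$.

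The main obstacle is precisely the existence of $x$ when no vertex of $\tau$ is usable, i.e.\ when $\partial\tau\subset\partial\Pc$ and several facets are tight; here the interior lattice point $p$ must enter decisively. The strategy is to choose $x$ on the intersection of the tight facets (the shallow side of $z$) while keeping every remaining slack $\ell_G(z)-\ell_G((x,1))$ nonnegative, using the positive $(p,1)$-contribution to certify interiority of the remainder. Small examples already show that the correct $x$ is in general neither a vertex of $\tau$ nor $p$ itself (e.g.\ for the $2$-dimensional triangle one is led to subtract a boundary lattice point $x$ with $z-(x,1)=(p,1)$), so the heart of the matter is a genuine selection argument producing such an $x$ from $p$ and the facet geometry. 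I expect this existence statement to be the one hard step, with everything preceding it being bookkeeping built on Lemmas \ref{simplex_a}--\ref{simplex_c} and Theorem \ref{winfried_a}.
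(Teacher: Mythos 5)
Your bookkeeping reductions are sound and match the paper's framework: $\deg z$ is an integer, $\deg z=e$ is excluded by the emptiness argument of Lemma \ref{simplex_c}, and the peel from the proof of Theorem \ref{winfried_a} funnels the case $e=d$ into the case $e=d-1$. But the argument stops exactly at the statement that carries all the content: the existence of a lattice point $x\in\Pc$ with $z-(x,1)\in M^*(\Pc)$ when $z=x_1+\cdots+x_d$ is the vertex sum of an empty $(d-1)$-simplex $\tau$ with $\tau\setminus\partial\tau\subset\Pc\setminus\partial\Pc$ but $\partial\tau\subset\partial\Pc$. You describe a strategy (tight facets, the positive $(p,1)$-contribution) and then explicitly defer it (``I expect this existence statement to be the one hard step''), without exhibiting any selection rule or proving that one exists. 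This is not a routine detail: for an arbitrary lattice triangulation this configuration genuinely occurs, the required $x$ is in general neither a vertex of $\tau$ nor the interior point $p$ (as you yourself note), and its existence depends on the global facet geometry of $\Pc$. As written, the proposal is a correct reduction plus an unproved conjecture, i.e., there is a genuine gap at the crux.

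The idea your approach is missing is that the paper \emph{avoids} this hard configuration rather than solving it: the triangulation is not fixed arbitrarily but chosen adapted to the interior lattice points. By Lemma \ref{triangulation} one triangulates $\partial\Pc$, cones over one interior lattice point, and stellar-subdivides at the remaining interior lattice points; in the resulting triangulation every simplex not contained in $\partial\Pc$ has a vertex $x_0\in(\Pc\setminus\partial\Pc)\cap\ZZ^m$, so a cell with $\partial\tau\subset\partial\Pc$ and relative interior inside $\Pc\setminus\partial\Pc$ never arises. The reduced form $z=\sum_i s_ix_i$ then has positive weight on the interior vertex $x_0$, and the proof splits into two short cases: if $s_0<1$, then $z\in M^*(\Pc)$ automatically (positive weight on an interior point), $\deg z\le\dim\sigma\le d$, and $\deg z=d$ is excluded by emptiness as in Lemma \ref{simplex_c}, giving $\rdeg y\le d-1$; if $s_0=1$, then $r_0$ is a positive integer, one splits off $x_0$ itself, and $\rdeg y=1$. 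If you want to rescue your route, the cleanest way to prove your existence statement is precisely to re-triangulate so that an interior lattice point is a vertex of every relevant cell --- which is the paper's Lemma \ref{triangulation} in disguise.
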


In order to prove Theorem \ref{winfried_b}, a special lattice triangulation using interior lattice points effectively is required.

\begin{Lemma}
\label{triangulation}
Let $\Pc$ be a lattice polytope with an interior lattice point.  Then $\Pc$ has a lattice triangulation $\Sigma$ for which $\sigma\subset \partial \Pc$  if all vertices of $\sigma$ are in $\partial \Pc$.
\end{Lemma}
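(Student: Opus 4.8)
The plan is to build the triangulation in two stages, first coning from a single interior lattice point and then refining in the interior only. Fix an interior lattice point $v\in(\Pc\setminus\partial\Pc)\cap\ZZ^m$, which exists by hypothesis. First I would choose a lattice triangulation $\Sigma_\partial$ of the boundary $\partial\Pc$: each facet $F$ of $\Pc$ is a lattice polytope of dimension $d-1$, and these can be triangulated simultaneously and compatibly (so that two facets induce the same triangulation on a common face) by, e.g., fixing a linear order on $\partial\Pc\cap\ZZ^m$ and taking the associated pulling triangulation. Since this triangulation uses every boundary lattice point as a vertex, each of its simplices is empty.

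Next I would cone from $v$: set $\Sigma_1=\{\,\conv(\{v\}\cup\tau):\tau\in\Sigma_\partial\,\}$ together with all faces. Because $v$ lies in the interior, every facet of $\Pc$ is visible from $v$, so this star is a genuine triangulation of $\Pc$ whose maximal cells are the pyramids $v*\tau$ with $\tau$ ranging over the maximal cells of $\Sigma_\partial$, and two such pyramids meet in $v*(\tau\cap\tau')$. The decisive structural feature is that $\partial(v*\tau)\cap\partial\Pc=\tau$: the base $\tau$ lies in a single facet $F$, while the relative interiors of the remaining faces of $v*\tau$ run into $\Int\Pc$. Hence every lattice point of $v*\tau$ lying on $\partial\Pc$ already lies in $\tau\subseteq F$.

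The triangulation $\Sigma_1$ need not use the interior lattice points other than $v$, so I would refine it to a lattice triangulation $\Sigma$ by inserting the missing (necessarily interior) lattice points one at a time, starring each inserted point into the cell, or into the faces, that contain it. This is the standard pulling refinement, carried out with a fixed order so that the insertions on shared faces agree; once every lattice point is a vertex, every simplex of $\Sigma$ is empty and $\Sigma$ is a lattice triangulation. The point of confining the refinement to interior insertions is the invariant it preserves: every cell of $\Sigma$ is contained in a unique pyramid $v*\tau$ of $\Sigma_1$. Combined with the previous paragraph, this forces the boundary vertices of any cell to lie in the single facet $F\supseteq\tau$. Consequently, if $\sigma\in\Sigma$ has all its vertices in $\partial\Pc$, those vertices lie in a common facet $F$, whence $\sigma\subseteq F\subseteq\partial\Pc$, which is exactly the asserted property.

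The step I expect to be the main obstacle is making the interior refinement rigorous while keeping the cone structure intact: one must insert the interior lattice points in a consistent (e.g.\ pulling) order so that the result is a bona fide simplicial complex in which all lattice points are vertices---hence all simplices empty---yet every cell still refines one of the pyramids $v*\tau$. The compatible triangulation $\Sigma_\partial$ of the boundary is a second, more routine, point: it is needed only so that the stars $v*\tau$ patch together, and any coherent rule (a global pulling order) supplies it.
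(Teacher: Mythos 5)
Your proposal is correct and follows essentially the same route as the paper's proof: triangulate $\partial\Pc$ by a lattice triangulation, cone over a single interior lattice point, and then stellarly subdivide at the remaining interior lattice points. You merely spell out the verification that the paper leaves implicit, namely the invariant $(v*\tau)\cap\partial\Pc=\tau$ and its preservation under the interior stellar refinements, which is a worthwhile but not essentially different addition.
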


\begin{proof}
Let $\Delta$ be a lattice triangulation of $\partial \Pc$ and $x$ an interior point of $\Pc$.  Then one defines the triangulation $\Sigma'$ of $\Pc$ by $\Delta\cup\{ \conv(\sigma, x): \sigma \in \Delta\}$.  Finally $\Sigma$ is obtained from $\Sigma'$ by stellar subdivision with respect to the remaining interior points of $\Pc$ in an arbitrary order.
\end{proof}  

\begin{proof}[Proof of Theorem \ref{winfried_b}]
Let $\Sigma$ be a lattice triangulation constructed in Lemma \ref{triangulation}.  In particular, every $\sigma \in \Sigma$ with $(\sigma \setminus \partial \sigma) \cap \partial \Pc = \emptyset$ possesses a vertex belonging to $(\Pc \setminus \partial \Pc) \cap \ZZ^m$.  Let $y \in M^*(\Pc)$ belong to $\sigma \setminus \partial \sigma$ with $\sigma \in \Sigma$.  Let $y = \sum_{i=0}^{q} r_i x_i$, where $x_0, \ldots, x_q$ are the vertices of $\sigma$, where $x_0 \in (\Pc \setminus \partial \Pc) \cap \ZZ^m$ and where $q \leq d$.  Let $s_i = r_i - \lceil r_i - 1 \rceil$.  Then $0 < s_i \leq 1$ for $i = 0,1,\ldots,q$ and  
\[
y = \sum_{i=0}^{q} s_i x_i + \sum_{i=0}^{q} \lceil r_i - 1 \rceil x_i.
\]
Assume first that  $s_0 < 1$ and $y_0 = \sum_{i=0}^{q} s_i x_i \in M^*(\Pc)$.  Then $\deg y_0 = \sum_{i=0}^{q} s_i \leq q \leq d$.  Thus, by using Lemma \ref{simplex_c}, one has $\rdeg y_0 < d$.  Finally, if $s_0 = 1$, then $r_0$ is a positive integer and 
\[
y = x_0 + \left[(r_0 - 1) x_0 + \sum_{i=1}^{q} r_i x_i\right].
\]  
Thus $\rdeg y = 1$.   
\end{proof}

\begin{Corollary}
\label{canonical_module}
Let $\Pc$ be a lattice polytope of dimension $d$ and $K[\Pc]$ its toric ring. Then the maximal degree of monomials belonging to the unique minimal system of monomial generators of the canonical module of $K[\Pc]$ is bounded by $d$.

If $\Pc$ possesses an interior lattice point, it is bounded by $d-1$.
\end{Corollary}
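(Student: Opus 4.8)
The plan is to deduce the corollary from Theorems \ref{winfried_a} and \ref{winfried_b} by comparing the minimal generators of $\omega(K[\Pc])$ with the invariant (\ref{reduced_degree}). The delicate point, to be kept in mind throughout, is that the semigroup underlying $K[\Pc]$ is the full saturation $N(\Pc):=C(\Pc)\cap\ZZ^{m+1}$, which may strictly contain $M(\Pc)$ when $\Pc$ fails to be IDP. Thus the ring-theoretic notion of ``minimal generator'' is governed by $N(\Pc)$ rather than by $M(\Pc)$, and one must make sure the comparison with the combinatorial invariant runs in the right direction.

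First I would fix the dictionary. By the Danilov--Stanley theorem the monomials $\mathbf{x}^{\ab}t^n$ with $(\ab,n)\in M^*(\Pc)$ form a $K$-basis of $\omega(K[\Pc])$, and $\deg(\mathbf{x}^{\ab}t^n)=n=\deg(\ab,n)$. Every nonzero element of $N(\Pc)$ has positive degree, so the graded maximal ideal $\mm$ of $K[\Pc]$ is spanned by the monomials with exponent in $N(\Pc)\setminus\{0\}$. Consequently a monomial $u=\mathbf{x}^{\ab}t^n$ of $\omega(K[\Pc])$ is a minimal generator, that is $u\notin\mm\,\omega(K[\Pc])$, if and only if $(\ab,n)$ cannot be written as $w+z$ with $w\in N(\Pc)\setminus\{0\}$ and $z\in M^*(\Pc)$; call such a point $N(\Pc)$-irreducible.

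Next I would carry out the comparison. Since $M(\Pc)\subseteq N(\Pc)$, an $N(\Pc)$-irreducible point admits no decomposition even with $w\in M(\Pc)\setminus\{0\}$, hence it is $M(\Pc)$-irreducible; by definition this forces $\rdeg_\Pc y=\deg y$ for the exponent $y=(\ab,n)$ of any minimal generator $u$. As $y\in M^*(\Pc)$, we obtain
\[
\max\{\deg u : u\in G(\omega(K[\Pc]))\}\ \leq\ \max\{\rdeg_\Pc y : y\in M^*(\Pc)\}.
\]
Thus any upper bound for the invariant (\ref{reduced_degree}) is at once an upper bound for the maximal degree of a minimal generator of the canonical module.

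Finally the two assertions drop out: the general bound is the content of Theorem \ref{winfried_a}, and the improved bound under the hypothesis of an interior lattice point is Theorem \ref{winfried_b}. I expect the main obstacle to be conceptual rather than computational, namely recognizing that enlarging $M(\Pc)$ to the saturation $N(\Pc)$ can only shrink the set of irreducible points, so that the $M(\Pc)$-based estimates of Section 3 transfer verbatim to $K[\Pc]$; once the inequality above is set up in this direction, nothing further is required. One should also observe that the a priori value $d+1$ of Theorem \ref{winfried_a}(a) is attained precisely on empty simplices by Lemma \ref{simplex_c}, so that the general bound $d$ is the assertion of Theorem \ref{winfried_a}(b).
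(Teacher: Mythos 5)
Your transfer mechanism is correct and is essentially the paper's own (implicit) argument: the paper proves its bounds for the invariant (\ref{reduced_degree}), i.e.\ for generation over the subalgebra generated in degree $1$, and then asserts (end of Section 1, and again in Remark \ref{deg 1 subalgbera}) that such bounds pass to the minimal generators of $\omega(K[\Pc])$ over the full toric ring. Your observation that a point irreducible with respect to $N(\Pc)=C(\Pc)\cap\ZZ^{m+1}$ is a fortiori $M(\Pc)$-irreducible, so that $\deg u=\rdeg_\Pc y\le\max\{\rdeg_\Pc y':y'\in M^*(\Pc)\}$, is exactly the right way to make that passage precise, and it settles the second assertion: a polytope with an interior lattice point is covered directly by Theorem \ref{winfried_b} (note, though, that you should carry along the hypothesis $d\ge 2$ from that theorem, as the paper's introduction does; for $d=1$ the generator of $\omega$ has degree $1>d-1$).

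The gap is in your final sentence, and it is genuine: Theorem \ref{winfried_a}(b) gives the bound $d$ only when $\Pc$ is \emph{not} an empty simplex, and for an empty simplex your inequality yields only $d+1$ --- indeed Lemma \ref{simplex_c}, which you cite, shows the invariant \emph{equals} $d+1$ there. So ``the general bound $d$ is the assertion of Theorem \ref{winfried_a}(b)'' does not dispose of all lattice polytopes, and your proof is silent precisely on the empty simplices. This case cannot be argued away: for a unimodular $d$-simplex $\sigma$ the ring $K[\sigma]$ is a polynomial ring and $\omega(K[\sigma])$ is generated by a single monomial of degree $d+1$ (Proposition \ref{empty_toric}(a)), so the first sentence of the corollary, read literally, fails there, and must be understood as tacitly excluding empty simplices. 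For non-unimodular empty simplices the bound can be rescued over the full toric ring, but only by the separate argument of Proposition \ref{empty_toric}(b): one subtracts a nonzero $w=q_0(v_0,1)+\dots+q_d(v_d,1)\in N(\sigma)$ with all $0\le q_i<1$ from $(v_0,1)+\dots+(v_d,1)$ to see that the degree-$(d+1)$ point is not a minimal generator --- an argument that uses $N(\sigma)\setminus M(\sigma)$ essentially and therefore does \emph{not} follow from your inequality, which only bounds full-ring generators by the $M(\Pc)$-invariant. To complete your proof you should either add the non-empty-simplex hypothesis to the first assertion (matching Theorem \ref{winfried_a}(b)) or treat empty simplices via Proposition \ref{empty_toric}, flagging the unimodular exception.
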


\begin{Remark}\label{deg 1 subalgbera}
The bound in Corollary \ref{canonical_module} is the degree bound for the minimal system of generators of the canonical module with respect to the subalgebra generated by the degree $1$ elements of the toric ring. In general the minimal system of generators with respect to the full toric ring is smaller, as Proposition \ref{empty_toric} below shows, but in general it is difficult to get information on it. In the next section we will consider examples in which the toric ring is generated in degree $1$ and the bound in Corollary \ref{canonical_module} is optimal.
\end{Remark}

Recall that a lattice $d$-simplex with vertices $v_0,\dots,v_d\in \RR^m$ is \emph{unimodular} if its $d$-dimensional volume is $1/d!$. An equivalent condition is that the lattice vectors $(v_0,1),\dots,(v_d,1)$ generate a direct summand of $\ZZ^{m+1}$, or in algebraic terms, that the toric ring is isomorphic to a polynomial ring in $d+1$ variables, generated by the monomials ${\bf x}^{(v_0,1)},\dots, {\bf x}^{(v_d,1)}$ as a $K$-algebra.
In particular a unimodular simplex is empty.

\begin{Proposition} \label{empty_toric}
Let $\sigma$ be an empty lattice $d$-simplex in $\RR^m$. 
\begin{itemize}
\item[(a)] If $\sigma$ is unimodular, then the canonical module of $K[\sigma]$ is generated by a single element of degree $d+1$ as a $K[\Pc]$-module.

\item[(b)] If $\sigma$ is non-unimodular, then the canonical module of $K[\sigma]$ is generated by elements of degree $\le d-1$ as a $K[\Pc]$-module.
\end{itemize}
\end{Proposition}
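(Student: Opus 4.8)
The plan is to analyze the monomials $\mathbf{x}^{\ab}t^n$ generating the canonical module, which by Danilov–Stanley correspond to lattice points in $\Int C(\sigma)\cap\ZZ^{m+1}$, i.e. to elements of $M^*(\sigma)$, and to determine which of these are needed as $K[\sigma]$-module generators. Since $\sigma$ is empty, Lemma \ref{simplex_c} tells us that $M(\sigma)=\ZZ_+ x_0+\cdots+\ZZ_+ x_d$ with the $x_i$ linearly independent, so every $y\in C(\sigma)\cap\ZZ^{m+1}$ has a \emph{unique} expression $y=\sum_{i=0}^d q_i x_i$ with $q_i\ge 0$ rational, and $y\in M^*(\sigma)$ exactly when all $q_i>0$. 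The module generators with respect to $K[\sigma]=K[x_0,\dots,x_d]$ are precisely the $M(\sigma)$-irreducible elements $G(\sigma)$, which by \eqref{irreducible} are those $y$ with $0<q_i\le 1$ for all $i$.

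For part (a), when $\sigma$ is unimodular the lattice points $(v_i,1)$ generate a direct summand, so the only $y\in C(\sigma)\cap\ZZ^{m+1}$ with all $q_i\le 1$ and all $q_i>0$ is the point $x_0+\cdots+x_d$ with every $q_i=1$: any lattice point forces each $q_i\in\ZZ$, and $0<q_i\le 1$ leaves only $q_i=1$. Thus $G(\sigma)=\{x_0+\cdots+x_d\}$, a single generator of degree $d+1$, and this is exactly the statement of (a). This case is essentially immediate from the structure established in Lemma \ref{simplex_c} together with the unimodularity criterion recalled just above the proposition.

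For part (b), the main point is that non-unimodularity produces a lattice point $x\in C(\sigma)\cap\ZZ^{m+1}$ whose coordinates $r_i$ are not all integral. I would first show that every $M(\sigma)$-irreducible $y$ has degree $\le d+1$ by Lemma \ref{simplex_a}, and that $\deg y=d+1$ forces $y=x_0+\cdots+x_d$; the task is therefore to rule out both $\deg y=d+1$ and $\deg y=d$ for module generators. The second is handled directly by Lemma \ref{simplex_c}, which asserts $\rdeg y\ne d$ for all $y\in M^*(\sigma)$, so no $M(\sigma)$-irreducible element has degree $d$. For the degree $d+1$ element $y=x_0+\cdots+x_d$, I would use the fractional lattice point: writing $x=\sum r_i x_i$ with each $0<r_i<1$ and some $r_i$ noninteger (here the emptiness still applies only to $\sigma\cap\ZZ^m$ being the vertices, so $x$ lies strictly inside $C(\sigma)$ but not in $\sigma$ itself), one checks that $y-x=\sum(1-r_i)x_i$ again has all coefficients in $(0,1)$, hence lies in $M^*(\sigma)$, while $x\in M(\sigma)$ only if the $r_i$ are integers.

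The hard part will be reconciling these facts to conclude that $x_0+\cdots+x_d$ is \emph{not} a $K[\sigma]$-module generator. The subtlety is that $x$ need not lie in $M(\sigma)$ (its coordinates are fractional), so $y-x$ is not directly a $K[\sigma]$-multiple relation. The resolution I anticipate is to combine part (a)'s and part (b)'s first halves with Lemma \ref{simplex_c}: since every $M(\sigma)$-irreducible element has $\rdeg\ne d$ and the unique candidate of degree $d+1$ can be excluded by exhibiting a decomposition $y=w+y'$ with $w\in M(\sigma)$ nonzero and $y'\in M^*(\sigma)$, it suffices to produce from the non-unimodularity an honest lattice point $w=\sum c_i x_i$ with $c_i\in\ZZ_{\ge 0}$, not all zero, such that $y-w$ still lies in $\Int C(\sigma)$. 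Such a $w$ exists precisely because the index $[\ZZ^{m+1}:\sum\ZZ(v_i,1)]>1$ yields a nonzero lattice point in the half-open parallelepiped, of degree between $1$ and $d$; subtracting it from $x_0+\cdots+x_d$ keeps all coordinates positive and shows $x_0+\cdots+x_d$ is reducible. Hence all generators have degree $\le d-1$, as claimed.
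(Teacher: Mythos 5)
Your setup and part (a) are fine, and for (b) you correctly isolate the single dangerous candidate $y=x_0+\cdots+x_d$ (degrees are $\le d+1$ by Lemma \ref{simplex_a}, and degree $d$ is excluded by Lemma \ref{simplex_c}), and you correctly extract from non-unimodularity a lattice point $w=\sum_{i=0}^d q_i x_i\in C(\sigma)\cap\ZZ^{m+1}$ in the half-open parallelepiped, with $0\le q_i<1$, so that $y-w=\sum_{i=0}^d(1-q_i)x_i\in\Int C(\sigma)$. But your final step is wrong: there is \emph{no} nonzero $w=\sum c_i x_i$ with $c_i\in\ZZ_{\ge 0}$ and $y-w\in\Int C(\sigma)$, since $c_j\ge 1$ for some $j$ forces the $j$-th barycentric coordinate of $y-w$ to be $1-c_j\le 0$. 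Indeed, Lemma \ref{simplex_c} asserts precisely that $y$ is $M(\sigma)$-irreducible for \emph{every} empty simplex, unimodular or not; so the ``honest lattice point with integer coefficients'' you call for cannot exist, and under your reading of the module structure part (b) would simply be false.

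The gap stems from misidentifying $K[\sigma]$. By the paper's definition the toric ring is $K[{\bf x}^\ab t^n : (\ab,n)\in C(\sigma)\cap\ZZ^{m+1}]$, the full normal semigroup ring on \emph{all} lattice points of the cone — not the subalgebra $K[x_0,\dots,x_d]$ generated by the degree $1$ monomials (the two differ exactly when $\sigma$ is non-unimodular; distinguishing them is the whole point of Remark \ref{deg 1 subalgbera} and of this proposition). Once this is fixed, your ``subtlety'' evaporates: the fractional-coordinate point $w$ \emph{is} a monomial of $K[\sigma]$, so the decomposition $y=w+(y-w)$ with $y-w\in\Int C(\sigma)$ immediately shows $y$ is not a minimal generator. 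That one line is the paper's entire proof of (b); your degree bookkeeping (via Lemmas \ref{simplex_a} and \ref{simplex_c}, which still applies because generators over the full ring form a subset of the $M(\sigma)$-irreducible elements) then gives degree $\le d-1$. A minor additional slip: the parallelepiped point need only satisfy $0\le q_i<1$, with some $q_i$ possibly $0$, which is harmless since $1-q_i>0$ in any case.
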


\begin{proof}
For (a) it is enough to observe that every lattice point in $\Int C(\Pc)$ has a unique representation as a linear combination of $(v_0,1),\dots,(v_d,1)$ with nonnegative coefficients in $\ZZ$.

For (b) we use that there must exist a nonzero $w=q_0(v_0,1)+\dots+q_d(v_d,1)\in C(\Pc)\cap \ZZ^{m+1}$ with $0\le q_i < 1$ for $i=0,\dots,d$. After our previous results we must only exclude that  $y=(v_0,1)+\dots+(v_d,1)$ belongs to the minimal system of generators. But this is clear, since $y-w\in\Int C(\Pc)$.
\end{proof}

\section{Examples}
It is easy to find a $d$-simplex $\sigma$ with $\rdeg x = d$ for all $x\in M^*(\sigma)$.

\begin{Example}
Define  $\sigma$ by its vertices
$0, 2e_1,e_2\dots,e_d$, where $e_i$ is the $i$th unit vector in $\RR^d$. Then $\Int C(\sigma)$ is generated by $(1,\dots,1,d) \in \RR^{d+1}$ with respect to the action of $M(\sigma)$ by addition. Moreover the toric ring $K[\sigma]$ is generated by the monomials ${\bf x}^\ab t$ where $\ab \in M(\sigma)$.
\end{Example}

Now we give an example that shows the optimality of Theorem \ref{winfried_b}. 

\begin{Example}
Let $\Pc \subset \RR^d$, $d\ge 2$, denote the polytope defined by the system of inequalities
\begin{align*}
& 0 \leq x_i \leq 2, \qquad 1 \leq i \leq d - 1, \\
& 0 \leq x_d \leq d, \\
& x_1 + \cdots + x_d \leq d + 1.
\end{align*}
It is known \cite[Example 2.6 (c)]{HH} that $\Pc$ is a lattice polytope of dimension $d\ge 2$ whose toric ring is generated in degree $1$.  One has $(1, \ldots, 1) \in \Pc \setminus \partial \Pc$.  Let 
\[
y_i = (1, \ldots, 1, (i-1)d + i, i) \in \ZZ^{d+1}, \qquad  i = 1,2,\ldots,d-1.
\]
Then $y_i \in M^*(\Pc)$ with $\deg y_i = i$.
\end{Example}

\begin{Lemma}
Each $y_i$ is $M(\Pc)$-irreducible.
\end{Lemma}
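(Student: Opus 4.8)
The plan is to argue directly from the definition of $M(\Pc)$-irreducibility, exploiting that $M(\Pc)$ is generated by its degree-$1$ elements, which are exactly the lattice points $(v,1)$ with $v \in \Pc \cap \ZZ^d$. The first step is to reduce the problem to a finite check: I claim $y_i$ is $M(\Pc)$-irreducible if and only if $y_i - (v,1) \notin \Int C(\Pc)$ for every $v \in \Pc \cap \ZZ^d$. For the nontrivial direction, suppose $y_i = z + w$ with $z \in M^*(\Pc)$ and $0 \neq w \in M(\Pc)$. Splitting off a single degree-$1$ generator $(v,1)$ from $w$ gives $y_i - (v,1) = z + w'$ with $w' \in M(\Pc) \subset C(\Pc)$; since $z$ lies in $\Int C(\Pc)$ and adding a cone element keeps a point in the interior, $z + w' \in \Int C(\Pc)$, and being a lattice point it then lies in $M^*(\Pc)$. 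Thus a nontrivial reduction forces some $y_i - (v,1) \in M^*(\Pc)$, and conversely.

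The second step is to verify that no such subtraction lands in the open cone. Writing $y_i - (v,1) = (1-v_1, \ldots, 1-v_{d-1}, (i-1)d + i - v_d, i-1)$, call this $(z,n)$ with $n = i-1$. For $i=1$ it has degree $0$ and hence cannot lie in $\Int C(\Pc)$, so $y_1$ is irreducible; I then assume $i \geq 2$. Here I recall that $(z,n) \in \Int C(\Pc)$ means $0 < z_j < 2n$ for $j \le d-1$, $0 < z_d < dn$, and $z_1 + \cdots + z_d < (d+1)n$, whereas $v \in \Pc \cap \ZZ^d$ forces $v_j \in \{0,1,2\}$ for $j \le d-1$, $v_d \in \{0,\ldots,d\}$, and $\sum_j v_j \le d+1$.

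The core of the argument is to eliminate the inequalities one at a time. If some $v_j \ge 1$ with $j \le d-1$, then $z_j = 1 - v_j \le 0$ already violates $z_j > 0$, so I may assume $v_1 = \cdots = v_{d-1} = 0$. Next, if $v_d \le i$ then $z_d = (i-1)d + i - v_d \ge (i-1)d$, violating $z_d < d(i-1)$; hence I may assume $i+1 \le v_d \le d$, which is possible only because $i \le d-1$. In this remaining case I would compute the coordinate sum $\sum_{j=1}^d z_j = (d-1) + (i-1)d + i - v_d = (d+1)i - 1 - v_d$ and use $v_d \le d$ to get $\sum_{j=1}^d z_j \ge (d+1)(i-1) = (d+1)n$, which violates the facet inequality $z_1 + \cdots + z_d < (d+1)n$. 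In every case some defining strict inequality fails, so $y_i - (v,1) \notin \Int C(\Pc)$, completing the proof.

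I expect the only genuine content to lie in this final case: the first two families of inequalities merely force $v$ to be concentrated in its last coordinate with $v_d > i$, and it is precisely the diagonal facet $x_1 + \cdots + x_d \le d+1$ of $\Pc$ that then obstructs the reduction. The one delicate point is keeping track of strict versus non-strict inequalities at the boundary, since the conclusion in the last case is that the sum attains or exceeds the threshold $(d+1)n$ exactly; everything else is a routine evaluation.
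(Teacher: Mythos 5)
Your proof is correct, and it is organized genuinely differently from the paper's. Your key step --- peeling off a single degree-$1$ generator, which is legitimate because $M(\Pc)$ is by definition generated by $(\Pc\times\{1\})\cap\ZZ^{d+1}$ and because $\Int C(\Pc)+C(\Pc)\subset \Int C(\Pc)$ --- reduces irreducibility to the finite check that $y_i-(v,1)\notin \Int C(\Pc)$ for each $v\in\Pc\cap\ZZ^d$, which you then settle facet by facet. The paper instead takes an arbitrary decomposition $y_i=z+w$ with $z\in M^*(\Pc)$ and $w\in M(\Pc)$ of degrees $j+k=i$, $k\ge 1$: the first $d-1$ coordinates of $y_i$ force $z=(1,\dots,1,a,j)$ and $w=(0,\dots,0,b,k)$, and then a single arithmetic chain combining the strict diagonal inequality $(d-1)+a<j(d+1)$ for the interior point $z$ with the valid inequality $b\le kd$ for $w$ yields $(i-1)d+i=a+b<(i-1)d+(j+1)$, hence $j\ge i$, contradicting $k\ge 1$. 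So the paper needs neither the reduction to degree $1$ nor a case analysis, treating all degrees of $w$ at once, while your version is a transparent finite verification that identifies the obstructing facet for each candidate $v$ (a coordinate facet when some $v_j\ge 1$ with $j\le d-1$, and the facets involving $x_d$ otherwise). One small streamlining of your argument: once $v_1=\dots=v_{d-1}=0$, your Case C computation $\sum_{j=1}^{d} z_j=(d+1)i-1-v_d\ge (d+1)(i-1)$ uses only $v_d\le d$ and therefore already covers all $v_d\in\{0,\dots,d\}$, so your Case B (the facet $x_d\le d\,x_{d+1}$) is redundant, though harmless. Your care with strict versus non-strict inequalities is also sound: each inequality you invoke is valid on $C(\Pc)$ and $C(\Pc)$ is full-dimensional, so interior points must satisfy it strictly, and attaining equality, as happens in your last case, indeed places $y_i-(v,1)$ on the boundary.
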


\begin{proof}
Let $y_i = z + w$ with $z \in M^*(\Pc)$ and $w \in M(\Pc)$.  Let
\[
z = (1, \ldots, 1, a, j), \qquad w = (0, \ldots, 0, b, k).
\]
Then
\[
(d - 1) + a < j (d + 1), \qquad b \leq kd, \qquad i = j + k.
\]
Thus
\[
(i-1)d + i = a + b < j (d + 1) - (d - 1) + kd = (i - 1)d + (j + 1).
\]
Hence $i < j + 1$, a contradiction. 
\end{proof}

\begin{Corollary}
\label{best_possible}
One has $\{\rdeg y : y \in M^*(\Pc) \} = \{1,2, \ldots,d-1\}$.
\end{Corollary}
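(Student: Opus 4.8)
The plan is to establish the two inclusions separately, combining the preceding irreducibility Lemma with Theorem \ref{winfried_b}.

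For the inclusion $\{1,2,\ldots,d-1\} \subseteq \{\rdeg y : y \in M^*(\Pc)\}$, I would use the family $y_1,\ldots,y_{d-1}$ already exhibited. Since the preceding Lemma shows that each $y_i$ is $M(\Pc)$-irreducible, the very definition of irreducibility gives $\rdeg y_i = \deg y_i = i$. Hence every integer $i$ with $1 \le i \le d-1$ occurs as a reduced degree, and this inclusion is immediate.

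For the reverse inclusion, I would first invoke Theorem \ref{winfried_b}. Since $(1,\ldots,1) \in \Pc \setminus \partial \Pc$ is an interior lattice point and $d \ge 2$, that theorem applies and yields $\rdeg y \le d-1$ for every $y \in M^*(\Pc)$. It remains only to record the trivial lower bound $\rdeg y \ge 1$: every element of $M^*(\Pc) = \Int C(\Pc) \cap \ZZ^{m+1}$ is an interior lattice point of $C(\Pc)$, and since $\Pc$ sits at height $1$ while the cone meets height $0$ only at the origin, any such lattice point has degree at least $1$. In particular the minimal-degree representative $z$ in the definition of $\rdeg$ satisfies $\deg z \ge 1$, so $1 \le \rdeg y \le d-1$ for all $y$, giving $\{\rdeg y : y \in M^*(\Pc)\} \subseteq \{1,\ldots,d-1\}$.

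Putting the two inclusions together yields the claimed equality. I expect no serious obstacle: the substantive content lies entirely in the preceding irreducibility Lemma and in Theorem \ref{winfried_b}, and the corollary is essentially a bookkeeping combination of these two facts. The only point requiring a moment's care is the lower bound $\rdeg y \ge 1$, but this follows at once from the positivity of the degree of interior lattice points of $C(\Pc)$.
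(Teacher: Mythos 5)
Your proof is correct and is essentially the paper's own argument: the corollary is left proofless there precisely because it is the immediate combination of the preceding Lemma (each $y_i$ is $M(\Pc)$-irreducible, so $\rdeg y_i = \deg y_i = i$ for $i = 1, \ldots, d-1$) with Theorem \ref{winfried_b} (the upper bound $\rdeg y \leq d-1$, applicable since $(1,\ldots,1)$ is an interior lattice point and $d \geq 2$). Your care with the trivial lower bound $\rdeg y \geq 1$ is sound, since $\Int C(\Pc)$ contains no lattice points of degree $0$.
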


The above observation shows that Theorem \ref{winfried_b} cannot be improved even if one replaces the action of $M(\Pc)$ by that of the semigroup $C(\Pc)\cap \ZZ^{m+1}$: the two semigroups coincide for $\Pc$. 

{}

\end{document}